\newtheorem{theorem}{\sc Theorem}[section]
\newtheorem{lemma}[theorem]{\sc Lemma}
\newtheorem{proposition}[theorem]{\sc Proposition}
\newtheorem{hypothesis}[theorem]{\sc Hypothesis}
\begin{document}
\title[Finite Conjugacy Classes]{Groups with Boundedly Finite \\ Conjugacy Classes of Commutators}
\author{Gl\'aucia Dierings}
\address{ Department of Mathematics, University of Brasilia,
Brasilia-DF, 70910-900 Brazil}
\email{glauciadierings@yahoo.com.br}
\author{Pavel Shumyatsky }
\address{ Department of Mathematics, University of Brasilia,
Brasilia-DF, 70910-900 Brazil}
\email{pavel@unb.br}
\thanks{This research was supported by FAPDF and CNPq-Brazil}
\keywords{}
\subjclass[2010]{20E45, 20F12, 20F24}
\begin{abstract} In 1954 B. H. Neumann discovered that if $G$ is a group in which all conjugacy classes are finite with bounded size, then the derived group $G'$ is finite. Later (in 1957) Wiegold found an explicit bound for the order of $G'$. We study groups in which the conjugacy classes containing commutators are finite with bounded size. We obtain the following results.
 
Let $G$ be a group and $n$ a positive integer. 

If $|x^G|\leq n$ for any commutator $x\in G$, then the second derived group $G''$ is finite with $n$-bounded order.

If $|x^{G'}|\leq n$ for any commutator $x\in G$, then the order of  $\gamma_3(G')$ is finite and $n$-bounded. 
\end{abstract}

\maketitle

\section{Introduction} Given a group $G$ and an element $x\in G$, we write $x^G$ for the conjugacy class containing $x$. Of course, if the number of elements in $x^G$ is finite, we have $|x^G|=[G:C_G(x)]$. A group is said to be a BFC-group if its conjugacy classes are finite and of bounded size. One of B. H. Neumann's discoveries was that in a BFC-group the derived group $G'$ is finite \cite{bhn}. It follows that if $|x^G|\leq n$ for each $x\in G$, then the order of $G'$ is bounded by a number depending only on $n$. A first explicit bound for the order of $G'$ was found by J. Wiegold \cite{wie}, and the best known was obtained in \cite{gumaroti} (see also \cite{neuvoe} and \cite{sesha}).

In the present article we deal with groups $G$ such that $|x^G|\leq n$ whenever $x$ is a commutator, that is, $x=[x_1,x_2]$ for suitable $x_1,x_2\in G$. Here and throughout the article we write $[x_1,x_2]$ for $x_1^{-1}x_2^{-1}x_1x_2$. As usual, we denote by $G'$ the derived group of $G$ and by $G''$ the derived group of $G'$ (the second derived group of $G$).

\begin{theorem}\label{main} Let $n$ be a positive integer and $G$ a group in which $|x^G|\leq n$ for any commutator $x$. Then $|G''|$ is finite and $n$-bounded.
\end{theorem}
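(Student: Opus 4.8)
The plan is to combine the classical structure theory of FC-groups with a Neumann-type minimal-index argument and Schur's theorem. First I would record a structural observation: $G''$ is generated by the set $S=\{[a,b] : a,b \text{ are commutators of } G\}$. Indeed $G'$ is generated by the commutators, and since the set of commutators is invariant under conjugation, the usual commutator identities (together with $[a,b]^{g}=[a^{g},b^{g}]$) show that $[G',G']$ is generated by $S$, and that $S$ is itself a normal subset of $G$. Now every element of $S$ is a commutator of $G$, hence has at most $n$ conjugates in $G$; therefore $S$ lies in the FC-center $\Delta(G)$, so $G''\le\Delta(G)$ and $G''$ is an FC-group. The theorem thus reduces to showing that this FC-group is finite of $n$-bounded order.

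For the qualitative statement that $|G''|<\infty$, I would use the minimal-index trick. Among all subgroups of the form $\bigcap_{c\in F}C_{G}(c)$, where $F$ runs over finite sets of commutators of $G$, each has index at most $n^{|F|}$ in $G$, so one of them, say $C=\bigcap_{c\in F_{0}}C_{G}(c)$, has least index. Minimality forces $C\le C_{G}(c)$ for every commutator $c$, and hence $C=C_{G}(G')$; in particular $C_{G}(G')$ has finite index in $G$. Consequently $Z(G')=G'\cap C_{G}(G')$ has finite index in $G'$, so $G'$ is center-by-finite, and Schur's theorem yields that $G''=(G')'$ is finite.

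Turning this into an $n$-bounded order is the step I expect to be the real obstacle. A natural line is to work with the normal subgroups $N_{c}=\langle c^{G}\rangle$ attached to a commutator $c$: $N_{c}$ is generated by at most $n$ conjugates of $c$, each having at most $n$ conjugates in $N_{c}$, so $[N_{c}:Z(N_{c})]\le n^{n}$ and, by Wiegold's quantitative version of Schur's theorem, $|N_{c}'|$ is $n$-bounded; applying the same reasoning to $N_{a}N_{b}$ shows that every element of $S$ has $n$-bounded order, so that $G''$ is even locally finite. What is still missing — and this is the crux — is a bound on how many of the $N_{c}$ are needed, equivalently an $n$-bound on $[G:C_{G}(G')]$ rather than the mere finiteness supplied by the minimal-index argument. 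I would attempt to extract such a bound by a counting argument modeled on Wiegold's refinement of B. H. Neumann's theorem (controlling how the finitely many relevant $N_{c}$ together generate $G''$); failing that, one can invoke a compactness/ultraproduct argument, since for each fixed $n$ the hypothesis ``every commutator has at most $n$ conjugates'' is a first-order property and so passes to ultraproducts, the one delicate point being to verify that a sequence of putative counterexamples produces a group with infinite second derived subgroup in the ultraproduct.
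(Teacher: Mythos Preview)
Your proposal has a real gap, and it appears already in the qualitative step, not only in the quantitative one you flag. The minimal-index trick does not work as written: among the finite intersections $C_F=\bigcap_{c\in F}C_G(c)$ the one of \emph{least} index is $G$ itself (take $F=\emptyset$), which tells you nothing; and if you intended \emph{greatest} index, you have not shown that an extremal $C_F$ exists, since your bound $[G:C_F]\le n^{|F|}$ depends on $|F|$ and gives no uniform ceiling on the family. Without that, there is no argument that the descending family of the $C_F$ stabilises or that $C_G(G')$ has finite index in $G$, so Schur's theorem cannot be invoked. The compactness fallback is likewise incomplete, for the reason you yourself note: $G\mapsto G''$ is not first-order, so from $|G_N''|\to\infty$ one cannot directly conclude that the ultraproduct has infinite second derived group. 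Thus neither the qualitative finiteness nor the $n$-bound is actually established by your outline.

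The paper avoids the question of $[G:C_G(G')]$ altogether. Its engine is a Wiegold-type statement (Proposition~\ref{04}): if $a=[d,e]$ is a commutator whose $H$-class ($H=G'$) has maximal size, then an explicit commutator manipulation (writing $[h_1d,eh_2]^{h_2^{-1}}=u\cdot a$ and checking that $u$ lies in a suitable centraliser $U$) produces a subgroup $U_1\le G$ of $n$-bounded index with $[H,U_1']\le[H,a]^{d^{-1}}$ and $[H,[U_1,d]]\le[H,a]$, where $[H,a]$ itself has $n$-bounded order by a direct Schur-type estimate (Lemma~\ref{02}). After factoring out a normal subgroup of bounded order one obtains a subgroup $J\ge H$ of $n$-bounded index with $J'\le Z(H)$, and the proof finishes by induction on $[G:J]$: if $J\ne G$, choose a fresh maximal-class commutator $a_0=[d,e]$ with $d\notin J$, use the proposition again to control $[H,[J,d]]$ modulo another bounded normal subgroup, and enlarge $J$ to $J\langle d\rangle$. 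This inductive enlargement is precisely the missing idea; it replaces any attempt to bound $[G':Z(G')]$ globally.
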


Further, we consider groups $G$ in which $|x^{G'}|\leq n$ whenever $x$ is a commutator.

\begin{theorem}\label{outro} Let $n$ be a positive integer and $G$ a group in which $|x^{G'}|\leq n$ for any commutator $x$. Then $|\gamma_3(G')|$ is finite and $n$-bounded.
\end{theorem}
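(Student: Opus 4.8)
The plan is to bootstrap from Theorem~\ref{main}. Since every commutator of $G'$ is in particular a commutator of $G$, the hypothesis gives $|x^{G'}|\le n$ for every commutator $x$ of $G'$, so Theorem~\ref{main} applied to the group $G'$ shows that $(G')''=G^{(3)}$ is finite of $n$-bounded order. As $G^{(3)}$ is characteristic in $G'$ and hence normal in $G$, and the hypothesis passes to $G/G^{(3)}$, I would replace $G$ by $G/G^{(3)}$; thus one may assume $G''$ is abelian, and it suffices to bound $|\gamma_3(G')|=|[G'',G']|$, multiplying by $|G^{(3)}|$ at the end. A routine argument also reduces to the case that $G$ is finitely generated: if $|\gamma_3(G')|$ exceeded the target bound, finitely many witnesses would already lie, and be distinct, in $\gamma_3(G_0')$ for a suitable finitely generated $G_0\le G$, and $G_0$ inherits the hypothesis because $[G_0':C_{G_0'}(x)]\le[G':C_{G'}(x)]\le n$ for every commutator $x$ of $G_0$.

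Write $V=G''$ and $Q=G'/G''$, both abelian, and view $V$ as a $\mathbb{Z}Q$-module via conjugation, so that $\gamma_3(G')=[V,Q]=(\omega Q)V$ with $\omega Q$ the augmentation ideal. Two consequences of the hypothesis would be extracted. First, the commutators of $G'$ generate $V$ and each, being a commutator of $G$, has centralizer of index $\le n$ in $G'$; in particular, for every $q\in Q$ a preimage $\tilde q\in G'$ satisfies $\tilde q^{\,n!}\in C_{G'}(c)$ for each commutator $c$, so $\tilde q^{\,n!}$ centralizes $V$ (and therefore also $\gamma_3(G')$), and hence $Q$ acts on $V$ and on $\gamma_3(G')$ through groups of exponent dividing $n!$. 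Second, since $V$ is abelian the map $v\mapsto[v,g]$ is an endomorphism of $V$, so $\gamma_3(G')$ is generated by the commutators $w=[c,g]$ with $c$ a commutator of $G'$ and $g\in G'$; expanding $c^{g^{k}}=c\,w\,w^{g}\cdots w^{g^{k-1}}$ in $V$ and using that the $\langle g\rangle$-orbit of $c$ has size $d\le n$ yields $\prod_{i<d}w^{g^{i}}=1$, whence $w^{d}\in[\langle w\rangle,G']$. Applying this at each level gives $\gamma_k(G')^{\,n!}\le\gamma_{k+1}(G')$ for all $k\ge3$.

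To finish one must show that $\gamma_3(G')$ is finite of $n$-bounded order. The plan is to combine the facts above with the fact that $V=G''$ is generated by the commutators (which therefore cannot be too sparse in $V$), while each of them has a conjugacy class of size $\le n$ in $G'$ (so they cannot be too spread out): this should force $Q$ to act on $V$ through an $n$-boundedly finite quotient, bound the number of $\mathbb{Z}Q$-module generators of $\gamma_3(G')$, and bound the length of the descending chain $\gamma_3(G')\ge\gamma_4(G')\ge\cdots$; with $\gamma_k(G')^{\,n!}\le\gamma_{k+1}(G')$ this makes $\gamma_3(G')$ an abelian group of $n$-bounded exponent generated by $n$-boundedly many elements, hence finite of $n$-bounded order, and the last step can be packaged through Dietzmann's lemma or through the Neumann--Wiegold bound for BFC-groups applied to $\gamma_3(G')\rtimes\bigl(Q/C_Q(\gamma_3(G'))\bigr)$.

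The main obstacle is precisely this quantitative assembly. Neither the bounded-conjugacy-class hypothesis nor the $\mathbb{Z}Q$-module structure of $G''$ is by itself enough to force $\gamma_3(G')$ finite: an infinite cyclic $\gamma_3(G')$ on which $Q$ acts by $\pm1$ --- the ``infinite dihedral'' configuration --- satisfies every purely module-theoretic constraint, and is ruled out only by the interaction between ``$G''$ is generated by commutators'' and ``every commutator lies in a conjugacy class of size $\le n$''. Turning that interaction into an explicit bound depending on $n$ alone --- in particular, controlling simultaneously the number of generators of $\gamma_3(G')$ and the relevant central length of $G'$, uniformly over all groups $G$ satisfying the hypothesis --- is where the real work of the proof lies, and is presumably where Theorem~\ref{main} and the Neumann--Wiegold machinery re-enter.
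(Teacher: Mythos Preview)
Your proposal is not a proof: you yourself identify the obstacle (``this quantitative assembly'') and leave it open, writing that ``the real work of the proof lies'' in a step you do not carry out. The module-theoretic facts you extract --- that $Q$ acts on $V$ through a group of exponent dividing $n!$, and that $\gamma_k(G')^{n!}\le\gamma_{k+1}(G')$ for $k\ge 3$ --- are correct but, as you note, do not by themselves bound $|\gamma_3(G')|$: without an $n$-bound on the nilpotency class of $G'$ or on the number of abelian-group generators of $\gamma_3(G')$, the chain $\gamma_3(G')\ge\gamma_4(G')\ge\cdots$ need not terminate in boundedly many steps. Your proposed endgame via Dietzmann or via Neumann--Wiegold for $\gamma_3(G')\rtimes\bigl(Q/C_Q(\gamma_3(G'))\bigr)$ is not justified either, since you have not shown that elements of $\gamma_3(G')$ have $n$-boundedly many $G'$-conjugates, nor that $Q/C_Q(\gamma_3(G'))$ is finite of $n$-bounded order.

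The paper's argument is short and does \emph{not} pass through Theorem~\ref{main} at all. It applies the same machinery that underlies Theorem~\ref{main} --- Hypothesis~\ref{01}, Lemma~\ref{02}, Proposition~\ref{04} --- directly with $K=H=G'$. Proposition~\ref{04} yields a subgroup $U_1\le H$ of $n$-bounded index with $[H,U_1']\le[H,a]^{d^{-1}}=[H,b_0]$ for a single commutator $b_0$. Since $[H:U_1]$ is $n$-bounded, one picks $n$-boundedly many commutators $b_1,\dots,b_s$ with $H=\langle b_1,\dots,b_s,U_1\rangle$ and sets $T=\prod_{i=0}^{s}[H,b_i]$; by Lemma~\ref{02} each factor has $n$-bounded order, and all are normal in $H$, so $|T|$ is $n$-bounded. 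Modulo $T$ the images of $b_1,\dots,b_s$ and of $U_1'$ are central in $H$, hence $H/T$ is nilpotent of class at most $2$ and $\gamma_3(G')\le T$. The reduction to $G''$ abelian, the $\mathbb{Z}Q$-module viewpoint, and the infinite-dihedral worry are all bypassed.
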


Here $\gamma_3(G')$ denotes the third term of the lower central series of $G'$. We do not know whether under hypothesis of Theorem \ref{outro} the second derived group $G''$ must necessarily be finite. Note that under hypothesis of Theorem \ref{main} $\gamma_3(G)$ can be infinite. This can be shown using any example of an infinite torsion-free metabelian group whose commutator quotient is finite (see for instance \cite{gusi}).

We make no attempts to obtain good bounds for $|G''|$ in Theorem \ref{main} and $|\gamma_3(G')|$ in Theorem \ref{outro}. The proofs given here yield bounds $n^{54n^{14}}$ and $n^{12n^{10}}$, respectively. The bounds however do not look realistic at all.
\section{Proofs} 

Let $G$ be a group generated by a set $X$ such that $X=X^{-1}$. Given an element $g\in G$, we write $l_X(g)$ for the minimal number $l$ with the property that $g$ can be written as a product of $l$ elements of $X$. Clearly, $l_X(g)=0$ if and only if $g=1$. We call $l_X(g)$ the length of $g$ with respect to $X$.

\begin{lemma}\label{passma} Let $H$ be a group generated by a set $X=X^{-1}$ and let $K$ be a subgroup of finite index $m$ in $H$. Then each coset $Kb$ contains an element $g$ such that $l_X(g)\leq m-1$.
\end{lemma}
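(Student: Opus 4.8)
The plan is to build up the cosets of $K$ one "step" at a time, in the spirit of Schreier's classical argument. For each integer $i\geq 0$ let $L_i$ denote the set of those right cosets $Kc$ that contain at least one element $g$ with $l_X(g)\leq i$. Since $l_X(1)=0$, we have $K\in L_0$, and clearly $L_0\subseteq L_1\subseteq L_2\subseteq\cdots$. The lemma is exactly the assertion that $L_{m-1}$ consists of all $m$ cosets of $K$ in $H$, so it suffices to prove this.

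The key step is: if $L_i$ does not already contain every coset, then $L_{i+1}$ is strictly larger than $L_i$. Indeed, if $Kc\in L_i$ we may choose the representative $c$ with $l_X(c)\leq i$; then for every $x\in X$ the element $cx$ satisfies $l_X(cx)\leq i+1$, so $Kcx\in L_{i+1}$. Hence $L_{i+1}$ contains, besides all the cosets in $L_i$, every coset of the form $Kcx$ with $Kc\in L_i$ and $x\in X$. Suppose $L_{i+1}=L_i$ and put $S=\bigcup_{Kc\in L_i}Kc\subseteq H$. Then $Sx\subseteq S$ for every $x\in X$; since $X=X^{-1}$ generates $H$, iterating gives $Sh\subseteq S$ for all $h\in H$, and applying this to $h^{-1}$ as well yields $Sh=S$ for all $h\in H$. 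Fixing $s_0\in S$, for any $h\in H$ we get $h=s_0(s_0^{-1}h)\in S$, so $S=H$; that is, $L_i$ already contains all cosets, a contradiction. Thus $L_i\subsetneq L_{i+1}$ whenever $L_i$ is proper.

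Consequently $|L_i|$ strictly increases at each step until it exhausts the set of cosets, and since $|L_0|\geq 1$ we obtain $|L_{m-1}|\geq m$, i.e. $L_{m-1}$ is the set of all $m$ cosets. This means precisely that every coset $Kb$ contains an element $g$ with $l_X(g)\leq m-1$.

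I do not expect a genuine obstacle here; the only point needing a little care is the invariance argument showing that a proper union of cosets cannot be right-invariant under all of $X$, where it is essential that $X=X^{-1}$ (so that right-$X$-invariance really promotes to right-$H$-invariance, not merely invariance under a sub-semigroup). An equivalent and perhaps cleaner packaging is the Schreier coset graph on the $m$ cosets, with an edge from $Kc$ to $Kcx$ for each $x\in X$: this graph is connected because $X$ generates $H$, $L_i$ is the ball of radius $i$ about the vertex $K$, and a connected graph on $m$ vertices has diameter at most $m-1$, which gives the bound at once.
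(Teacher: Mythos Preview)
Your proof is correct. The key step---that if $L_{i+1}=L_i$ then the union $S$ of the cosets in $L_i$ is right-$X$-invariant, hence (using $X=X^{-1}$) right-$H$-invariant, hence all of $H$---is sound, and the counting $|L_0|\geq 1$, $|L_{i+1}|\geq |L_i|+1$ until saturation gives $|L_{m-1}|=m$ as required.

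The paper argues differently: it fixes a single coset $Kb$, chooses $g\in Kb$ of minimal length $s$, and assuming $s\geq m$ looks at the prefixes $y_j=x_1\cdots x_j$ of a minimal word for $g$. Minimality forces all $y_1,\dots,y_s$ to lie outside $K$, so by pigeonhole two prefixes $y_i,y_j$ with $i<j$ lie in the same coset; then $y_ix_{j+1}\cdots x_s$ is a shorter element of $Kb$, a contradiction. In short, the paper's proof is a local prefix/pigeonhole argument for one coset at a time, whereas yours is a global growth argument on the set of reachable cosets---equivalently the observation that the Schreier coset graph is connected on $m$ vertices and hence has diameter at most $m-1$. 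Both are standard; your formulation makes the role of connectivity transparent, while the paper's avoids introducing the auxiliary filtration $L_i$ and goes straight to the offending coset.
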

\begin{proof} If $b\in K$, the result is obvious. Therefore we assume that $b\not\in K$. Choose $g\in Kb$ in such a way that $s=l_X(g)$ is as small as possible and suppose that $s\geq m$. Write $g=x_1\cdots x_s$ with $x_i\in X$ and set $y_j=x_1\cdots x_j$ for $j=1,\dots,s$. Since $s$ is the minimum of lengths of elements in $Kb$, it follows that none of the elements $y_1,\dots,y_s$ lies in $K$. Thus, these $s$ elements belong to the union of at most $m-1$ right cosets of $K$ and we conclude that $Ky_i=Ky_j$ for some $1\leq i<j\leq s$. It is now easy to see that the element $h=y_ix_{j+1}\dots x_s$ belongs to $Kb$ while $l_X(h)<l_X(g)$. This is a contradiction with the choice of $g$.
\end{proof}

In the sequel the above lemma will be used in the situation where $H$ is the derived group of a group $G$ and $X$ is the set of commutators in $G$. Therefore we will write $l(g)$ to denote the smallest number such that the element $g\in G'$ can be written as a product of as many commutators. Recall that if $H$ is a group and $a\in H$, the subgroup $[H,a]$ is generated by all commutators of the form $[h,a]$, where $h\in H$. It is well-known that $[H,a]$ is always normal in $H$. Recall that in any group $G$ the following ``standard commutator identities" hold.
\begin{enumerate}
\item $[x,y]^{-1}=[y,x]$;
\item $[xy,z]=[x,z]^y[y,z]$;
\item $[x,yz]=[x,z][x,y]^z$.
\end{enumerate}
In what follows the above identities will be used without explicit references. 

We will now fix some notation and hypothesis.

\begin{hypothesis}\label{01} Let $G$ be a group and $K$ a subgroup containing $H=G'$. Let $X$ denote the set of commutators in $G$ and suppose that $C_K(x)$ has finite index at most $n$ in $K$ for each $x\in X$. Let $m$ be the maximum of indices of $C_H(x)$ in $H$, where $x\in X$.
Suppose further that $a\in X$ and $C_H(a)$ has index precisely $m$ in $H$. Choose $b_1,\dots,b_m\in H$ such that $l(b_i)\leq m-1$ and $a^H=\{a^{b_i};\ i=1,\dots,m\}$. (The existence of such elements is guaranteed by Lemma \ref{passma}.) Set $U=C_K(\langle b_1,\dots,b_m\rangle)$.
\end{hypothesis}

\begin{lemma}\label{02} Assume Hypothesis \ref{01}. Then for any $x\in X$ the subgroup $[H,x]$ has finite $m$-bounded order.
\end{lemma}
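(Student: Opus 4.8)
The plan is to show that $V:=[H,x]$ is generated by boundedly many commutators, deduce that it is centre-by-(boundedly finite) so that $V'$ is finite, and then rule out an infinite free-abelian part of $V/V'$.

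\emph{Bounded generation.} Since $x$ is a commutator, $x\in G'=H$, so $V=[H,x]$ is normal in $H$ and is normalized by $x$. The identity $[h_1h_2,x]=[h_1,x]^{h_2}[h_2,x]$ shows that $[h,x]$ depends only on the coset $C_H(x)h$; as $[H:C_H(x)]\le m$, Lemma~\ref{passma} (applied with $H=\langle X\rangle$, $X=X^{-1}$, $K=C_H(x)$) lets me pick coset representatives $h_1,\dots,h_m$ with $l(h_i)\le m-1$, and then $V=\langle c_1,\dots,c_m\rangle$ with $c_i:=[h_i,x]$. Each $c_i$ is a commutator in $G$, so $c_i\in X$ and $[H:C_H(c_i)]\le m$ by the definition of $m$; hence $C_H(V)=\bigcap_{i=1}^m C_H(c_i)$ has index at most $m^m$ in $H$, and since $V\le H$ this gives $[V:Z(V)]=[V:V\cap C_H(V)]\le m^m$.

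\emph{Finiteness of $V'$ and of $[V,x]$.} By Neumann's theorem in the quantitative form of \cite{wie} (or \cite{gumaroti}), a group whose centre has index $\le k$ has derived subgroup of finite $k$-bounded order; applied to $V$ this shows $V'$ is finite of $m$-bounded order, so $V$ is finitely generated and polycyclic-by-finite. The same reasoning applies to $D:=\langle x^H\rangle=\langle x\rangle V$: the subgroup $C_H(D)=\bigcap_{\sigma\in x^H}C_H(\sigma)$ is an intersection of at most $m$ subgroups of index $\le m$, so $D$ is centre-by-(boundedly finite) and $D'$ is finite of $m$-bounded order; since a routine commutator computation gives $D'=\langle\,[V,x],\,V'\,\rangle$, it follows that $[V,x]=[H,x,x]$ is finite of $m$-bounded order as well. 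One also checks easily that $V'[V,x]$ is normal in $V$, so $V'$ and $[V,x]$ both lie in the (finite, characteristic) largest finite normal subgroup $F$ of $V$; consequently $V/F$ is torsion-free abelian, say $V/F\cong\mathbb Z^{s}$, and $x$ acts trivially on $V/F$.

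\emph{Killing the free part.} It remains to show $s=0$. Let $\rho\colon H\to\operatorname{GL}_s(\mathbb Z)$ be the action on $V/F$; since $C_H(V)\le\ker\rho$, the image $\rho(H)$ has $m$-bounded order, and $\rho(x)=1$. On $H_1:=\ker\rho$ the map $h\mapsto [h,x]F\in\mathbb Z^{s}$ is a \emph{homomorphism} (because $H_1$ acts trivially on $V/F$), its kernel contains $H_1'$ and $C_{H_1}(x)$ and so has index $\le m$, and therefore its image is a finite subgroup of $\mathbb Z^{s}$, i.e. trivial; thus $[H_1,x]\subseteq F$. Taking coset representatives $g_1,\dots,g_t$ of $H_1$ in $H$ ($t\le|\rho(H)|$) and using $[h'g_j,x]=[h',x]^{g_j}[g_j,x]$ with $[h',x]\in F\trianglelefteq H$, one gets $V=\langle F,[g_1,x],\dots,[g_t,x]\rangle$, so $\mathbb Z^{s}=V/F$ is generated by the vectors $w_j:=[g_j,x]F$. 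Running the same homomorphism argument with the commutators $[g_j,x]\in X$ in place of $x$ shows $[H_1,[g_j,x]]\subseteq F$ and that the image of $[H,[g_j,x]]$ in $V/F$ is exactly the subgroup generated by the $(\rho(g_i)-1)w_j$. I then intend to close by induction (say on $s$, passing to a minimal counterexample): if some $[H,[g_j,x]]$ is infinite it is again of the type treated by the Lemma but supported on the submodule $\langle(\rho(g_i)-1)w_j\rangle$, and if all of them are finite then every $(\rho(g_i)-1)w_j$ is a torsion element of $\mathbb Z^{s}$, hence $0$, so $\rho(H)$ fixes each generator $w_j$, whence $\rho(H)=1$; but with $\rho(H)=1$ the homomorphism argument applied directly to $H$ forces $V/F$ finite, i.e. $s=0$ and $V=F$ finite of $m$-bounded order.

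The bounded-generation step and the appeals to Neumann--Wiegold are routine; the delicate point — the one where I expect to need the full force of Hypothesis~\ref{01}, namely that \emph{every} commutator (not just $x$), every conjugate of it, and every element of $V'$ simultaneously lies in $X$ and hence has a small centralizer — is closing the induction in the last paragraph: one must make certain that the inductive parameter genuinely decreases and that one does not simply recover an infinite $[H,[g_j,x]]$ of the same free rank as $[H,x]$.
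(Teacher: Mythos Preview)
Your bounded-generation step and the Schur/Wiegold deductions that $V'$ and $D'$ (hence $[V,x]$) are finite of $m$-bounded order are correct. The gap is exactly where you flag it: the ``induction on $s$'' does not close. If you pick $x$ with $s$ minimal among commutators $y$ having $[H,y]$ infinite, then whenever some $[H,[g_j,x]]$ is infinite it sits inside $V$, so its free rank is at most $s$, and by minimality it is exactly $s$; the parameter does not drop. Iterating only produces a descending chain of rank-$s$ subgroups of $\mathbb Z^{s}$, and such chains need not terminate (think of $2^k\mathbb Z$ in $\mathbb Z$). Nothing in Hypothesis~\ref{01} rules this out, and you offer no mechanism forcing the submodule $\langle(\rho(g_i)-1)w_j\rangle$ to have rank strictly below $s$.

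The paper bypasses the free-abelian analysis entirely with a short trick you are missing: rather than bound $V$ directly, embed it in the derived subgroup of a slightly larger group. Write each coset representative $h_i$ as a product $y_{i1}\cdots y_{i(m-1)}$ with $y_{ij}\in X$, and let $T$ be generated by all $H$-conjugates of the elements of $\{x\}\cup\{y_{ij}\}$. Each such element has at most $m$ conjugates in $H$, so $T$ is generated by $m$-boundedly many elements of $X$; hence $[T:Z(T)]$ is $m$-bounded and Schur's theorem gives $|T'|$ finite of $m$-bounded order. Because the generating set of $T$ is $H$-invariant, $T$ is normal in $H$, so $T'$ is normal in $H$; then each generator $[h_i,x]$ of $V$, being a product of $H$-conjugates of the commutators $[y_{ij},x]\in T'$, lies in $T'$. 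Thus $V\le T'$ and $|V|$ is $m$-bounded in one stroke, with no need to analyse $V/V'$ at all.
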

\begin{proof} Choose $x\in X$. Since $C_H(x)$ has index at most $m$ in $H$, by Lemma \ref{passma} we can choose elements $y_1,\dots,y_m$ such that $l(y_i)\leq m-1$ and $[H,x]$ is generated by the commutators $[y_i,x]$. For each $i=1,\dots,m$ write $y_i=y_{i1}\dots y_{i(m-1)}$, where $y_{ij}\in X$. The standard commutator identities show that $[y_i,x]$ can be written as a product of conjugates in $H$ of the commutators $[y_{ij},x]$. Let $h_1,\dots,h_s$ be the conjugates in $H$ of elements from the set $\{x,y_{ij};\ 1\leq i,j\leq m\}$. Since $C_H(h)$ has finite index at most $m$ in $H$ for each $h\in X$, it follows that $s$ is $m$-bounded. Let $T=\langle h_1,\dots,h_s\rangle$. It is clear that $[H,x]\leq T'$ and so it is sufficient to show that $T'$ has finite $m$-bounded order. Observe that $C_H(h_i)$ has finite index at most $m$ in $H$ for each $i=1,\dots,s$. It follows that the center $Z(T)$ has index at most $m^s$ in $T$. Thus, Schur's theorem \cite[10.1.4]{Rob} tells us that $T'$ has finite $m$-bounded order, as required.
\end{proof}

Note that the subgroup $U$ has finite $n$-bounded index in $K$. This follows from the facts that $l(b_i)\leq m-1$ and $C_K(x)$ has index at most $n$ in $K$ for each $x\in X$.

The next lemma is somewhat analogous with Lemma 4.5 of Wiegold \cite{wie}.
\begin{lemma}\label{03} Assume Hypothesis \ref{01}. Suppose that $u\in U$ and $ua\in X$. Then $[H,u]\leq[H,a]$.
\end{lemma}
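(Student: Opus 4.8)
The plan is to prove that $[h,u]\in[H,a]$ for every $h\in H$; since $[H,u]$ is generated by the commutators $[h,u]$ with $h\in H$, this yields $[H,u]\le[H,a]$.

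The first and crucial step is to determine the conjugacy class $(ua)^H$. The elements $a^{b_1},\dots,a^{b_m}$ are the $m$ distinct members of $a^H$, and $u$ centralizes each $b_i$ because $u\in U$; hence the conjugates $(ua)^{b_i}=u^{b_i}a^{b_i}=ua^{b_i}$ are $m$ pairwise distinct elements of $(ua)^H$. On the other hand $ua\in X$, so $[H:C_H(ua)]\le m$ and therefore $|(ua)^H|\le m$. It follows that $(ua)^H=\{ua^{b_i}\mid i=1,\dots,m\}$ and that this set has exactly $m$ elements.

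Next I would treat the case $h\in C_H(a)$. For such an $h$ we have $(ua)^h=u^ha^h=u^ha$, and since $(ua)^h\in(ua)^H$ there is an $i$ with $u^ha=ua^{b_i}$, that is, $u^{-1}u^h=a^{b_i}a^{-1}$. Because every commutator $[h,a]$ lies in $[H,a]$, the image of $a$ is central in $H/[H,a]$, whence $a^{b_i}a^{-1}\in[H,a]$; consequently $[h,u]=(u^{-1}u^h)^{-1}\in[H,a]$. For an arbitrary $h\in H$, the elements $b_1,\dots,b_m$ being a transversal of $C_H(a)$ in $H$, we can write $h=h_0b_j$ with $h_0\in C_H(a)$. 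Then
\[
[h,u]=[h_0b_j,u]=[h_0,u]^{b_j}[b_j,u]=[h_0,u]^{b_j}
\]
since $u$ centralizes $b_j$, and because $[h_0,u]\in[H,a]$ while $[H,a]$ is normal in $H$ with $b_j\in H$, we obtain $[h,u]\in[H,a]$, which completes the argument.

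The only point that needs care — and the only place where the particular choices made in Hypothesis \ref{01} are used — is the opening step: it is precisely the condition $u\in U$ that forces $(ua)^{b_i}=ua^{b_i}$, so that $(ua)^H$ cannot be strictly smaller than $a^H$. Once that identification is in hand, the remaining steps are routine manipulations with the standard commutator identities.
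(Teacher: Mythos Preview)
Your proof is correct and follows the same strategy as the paper's: identify $(ua)^H$ with $\{ua^{b_i}\mid i=1,\dots,m\}$ using $u\in U$ together with the maximality of $m$, and then read off that each commutator $[u,g]$ lies in $[H,a]$. The paper avoids your case split by treating an arbitrary $g\in H$ directly---from $(ua)^g=ua^{b_k}$ one gets $[u,g]=a^{b_k}(a^g)^{-1}$, which already lies in $[H,a]$ since $a$ is central in $H/[H,a]$; your detour through $C_H(a)$ and the transversal decomposition $h=h_0b_j$ is valid but unnecessary.
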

\begin{proof} Since $u\in U$, it follows that $(ua)^{b_i}=ua^{b_i}$ for each $i=1,\dots,m$. Therefore the elements $ua^{b_i}$ form the conjugacy class $(ua)^H$. For an arbitrary element $g\in H$ there exists $h\in\{b_1,\dots,b_m\}$ such that $(ua)^{g}=ua^{h}$ and so $u^ga^g=ua^h$. Therefore $[u,g]=a^ha^{-g}\in[H,a]$. The lemma follows.
\end{proof}

\begin{proposition}\label{04} Assume Hypothesis \ref{01} and write $a=[d,e]$ for suitable $d,e\in G$. There exists a subgroup $U_1\leq U$ with the following properties.
\begin{enumerate}
\item The index of $U_1$ in $K$ is $n$-bounded;
\item $[H,U_1']\leq[H,a]^{d^{-1}}$;
\item $[H,[U_1,d]]\leq[H,a]$.
\end{enumerate}
\end{proposition}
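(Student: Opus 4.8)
The plan is to take $U_1$ to be an intersection of boundedly many centralizers of commutators, so that $(1)$ is immediate, and then to deduce $(3)$ from Lemma \ref{03} applied to $a$ itself and $(2)$ from Lemma \ref{03} applied to the commutator $a^{d^{-1}}$. First I would record the ambient facts. Since $H=G'\leq K$, the subgroup $K$ is normal in $G$, so conjugation by $d$ is an index-preserving automorphism of both $K$ and $H$, and every conjugate of a commutator is again a commutator. In particular $a^{d^{-1}}\in X$ (explicitly $a^{d^{-1}}=[e,d^{-1}]=[d,e^{d^{-1}}]$), $C_H(a^{d^{-1}})=C_H(a)^{d^{-1}}$ still has index $m$ in $H$, and $(a^{d^{-1}})^{H}=\{(a^{d^{-1}})^{b_i^{d^{-1}}}\}$ with $l(b_i^{d^{-1}})\leq m-1$. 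Thus Hypothesis \ref{01} holds for $a^{d^{-1}}$ with conjugating elements $b_i^{d^{-1}}$ and with $U^{d^{-1}}$ in the role of $U$, and Lemma \ref{03} yields the variant: if $v\in U^{d^{-1}}$ and $v\,a^{d^{-1}}\in X$, then $[H,v]\leq[H,a^{d^{-1}}]=[H,a]^{d^{-1}}$.

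Next I would set $U_1=U\cap U^{d^{-1}}\cap C_K(a)\cap C_K(a^{d^{-1}})$ (intersecting in a few further centralizers of commutators if the proof of $(2)$ below forces it). Property $(1)$ is then clear: $U$ has $n$-bounded index in $K$ by the remark before Lemma \ref{03}; $U^{d^{-1}}$ has the same index in $K$, since conjugation by $d^{-1}$ preserves indices in $K$; and $C_K(a),C_K(a^{d^{-1}})$ have index at most $n$ because $a,a^{d^{-1}}\in X$.

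For $(3)$, let $u\in U_1$. Both $u$ and $u^{d}$ centralize every $b_i$ (the former since $u\in U$, the latter since $u\in U^{d^{-1}}$), so $[d,u]=(u^{d})^{-1}u$ centralizes every $b_i$, i.e. $[d,u]\in U$. Since $u\in C_K(a)$, the identity $[d,eu]=[d,u][d,e]^{u}=[d,u]\,a$ together with $[d,eu]\in X$ gives $[d,u]\,a\in X$. Lemma \ref{03}, applied with $[d,u]$ in the role of ``$u$'', then gives $[H,[d,u]]\leq[H,a]$ for every $u\in U_1$. By Lemma \ref{02} the subgroup $[H,a]$ is finite and normal in $H$; since $[H,[d,u]]\leq[H,a]$, each $[d,u]$ normalizes $[H,a]$, hence so does $[U_1,d]$, and then $[U_1,d]$ centralizes $H/[H,a]$, i.e. $[H,[U_1,d]]\leq[H,a]$, which is $(3)$.

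Property $(2)$ is intended to go the same way: for $u_1,u_2\in U_1$ the element $[u_1,u_2]$ automatically lies in $U^{d^{-1}}$ (a subgroup containing $u_1,u_2$), and once one knows $[u_1,u_2]\cdot a^{d^{-1}}\in X$ the variant of Lemma \ref{03} gives $[H,[u_1,u_2]]\leq[H,a]^{d^{-1}}$, after which the finiteness-and-normalization bookkeeping of the previous paragraph upgrades this to $[H,U_1']\leq[H,a]^{d^{-1}}$. The main obstacle is precisely the claim $[u_1,u_2]\cdot a^{d^{-1}}\in X$ for $u_1,u_2\in U_1$ — equivalently, after conjugating by $d$, that $[u_1^{d},u_2^{d}]\cdot a\in X$. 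In $(3)$ the corresponding product was rewritten as a single commutator by absorbing $a=[d,e]$ via $[d,eu]=[d,u]\,a$ or, dually, via $[xd,e]=[x,e]^{d}\,a$; to imitate this one wants to display $[u_1,u_2]$ itself as a commutator having $e$ (respectively $e^{d^{-1}}$) as one of its arguments. This is where the delicate commutator calculus is needed — most likely enlarging $U_1$ by further centralizers of commutators and exploiting the freedom in choosing $d,e$ with $[d,e]=a$ — and it is the step I would be least sure of in advance; the rest of the argument is the bookkeeping indicated above.
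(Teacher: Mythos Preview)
Your treatment of (1) and (3) is fine and close to the paper's. The gap you yourself flag in (2) is real, and the direction you suggest for closing it --- trying to exhibit $[u_1,u_2]$ as a commutator with $e$ (or $e^{d^{-1}}$) as one of its arguments --- will not work: there is no reason an arbitrary commutator of elements of $U_1$ should have that special shape, no matter how many further centralizers you intersect into $U_1$.

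The missing idea is the opposite one: do not try to force $[u_1,u_2]\cdot a^{d^{-1}}$ to be a commutator. Instead, start from a commutator that visibly contains both $d$ and $e$ and expand it. Concretely, for $h_1,h_2\in U_1$ compute
\[
[h_1 d,\,e h_2]^{h_2^{-1}}
\;=\;
\underbrace{[h_1,h_2]^{dh_2^{-1}}\,[d,h_2]^{h_2^{-1}}\,[h_1,e]^{d}}_{=:u}\cdot[d,e]
\;=\; u\,a,
\]
using only the standard identities. With $U_1=U\cap U^{d^{-1}}\cap U^{d^{-1}e^{-1}}$ one checks each of the three factors of $u$ lies in $U$, so Lemma~\ref{03} gives $[H,u]\le[H,a]$ for every choice of $h_1,h_2\in U_1$. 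Now specialize: setting $h_1=1$ leaves only the $[d,h_2]^{h_2^{-1}}$ factor, which yields (3) (this is exactly your argument, without the extra hypothesis $u\in C_K(a)$); setting $h_2=1$ leaves only $[h_1,e]^{d}$. Since those two factors already satisfy $[H,\cdot]\le[H,a]$ and $[H,a]$ is normal in $H$, the remaining factor $[h_1,h_2]^{dh_2^{-1}}$ must as well, giving $[H,[h_1,h_2]]\le[H,a]^{d^{-1}}$ and hence (2). Thus one commutator expansion handles (2) and (3) simultaneously, and the third conjugate $U^{d^{-1}e^{-1}}$ in the definition of $U_1$ replaces your ad hoc centralizers $C_K(a)$, $C_K(a^{d^{-1}})$.
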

\begin{proof} Set $$U_1=U\cap U^{d^{-1}}\cap U^{d^{-1}e^{-1}}.$$ Since the index of $U$ in $K$ is $n$-bounded, we conclude that the index of $U_1$ in $K$ is $n$-bounded as well. Choose arbitrarily elements $h_1,h_2\in U_1$. Write $$[h_1d,eh_2]=[h_1,h_2]^d[d,h_2][h_1,e]^{dh_2}[d,e]^{h_2}$$
and so 
$$[h_1d,eh_2]^{h_2^{-1}}=[h_1,h_2]^{dh_2^{-1}}[d,h_2]^{h_2^{-1}}[h_1,e]^d[d,e].$$ Denote the product $[h_1,h_2]^{dh_2^{-1}}[d,h_2]^{h_2^{-1}}[h_1,e]^d$ by $u$. Thus, the right hand side of the above equality is $ua$ while, obviously, on  the left hand side we have a commutator. Let us check that $u\in U$. We see that $[h_1,h_2]^{dh_2^{-1}}\in U_1^{dh_2^{-1}}\leq U$ because $U_1^d\leq U$. By the same reason, $[d,h_2]^{h_2^{-1}}\in U$. Finally, $[h_1,e]^d\in U_1^dU_1^{ed}\leq U$ so indeed $u\in U$. By Lemma \ref{03}, $[H,u]\leq[H,a]$. This holds for any choice of $h_1,h_2\in U_1$. In particular, taking $h_1=1$ we see that $[H,[d,h_2]^{h_2^{-1}}]\leq[H,a]$ while taking $h_2=1$ we conclude that $[H,[h_1,e]^d] \leq [H,a]$. It now follows that $[H,[h_1,h_2]^{dh_2^{-1}}]\leq[H,a]$. Since $[H,a]$ is normal in $H$, we have $[H,[h_1,h_2]]\leq[H,a]^{d^{-1}}$ and so $[H,U_1']\leq[H,a]^{d^{-1}}$, which proves that $U_1$ has property 2. Examine again the inclusion $[H,[d,h_2]^{h_2^{-1}}]\leq[H,a]$. Since $[H,a]$ is normal in $H$, it follows that $[H,[U_1,d]]\leq[H,a]$. Therefore $U_1$ has property 3 as well. The proof is now complete.
\end{proof}

We are ready to prove our main results.

\begin{proof}[Proof of Theorem \ref{main}] Recall that $G$ is a group in which $|x^G|\leq n$ for any commutator $x$. We need to show that $|G''|$ is finite and $n$-bounded.

We denote by $X$ the set of commutators in $G$ and set $H=G'$. Let $m$ be the maximum of indices of $C_H(x)$ in $H$, where $x\in X$. Of course, $m\leq n$. Choose $a\in X$ such that $C_H(a)$ has index precisely $m$ in $H$. Choose $b_1,\dots,b_m\in H$ such that $l(b_i)\leq m-1$ and $a^H=\{a^{b_i};\ i=1,\dots,m\}$. Set $U=C_G(\langle b_1,\dots,b_m\rangle)$. Note that the index of $U$ in $G$ is $n$-bounded. Applying Proposition \ref{04} with $K=G$ we find a subgroup $U_1$, of $n$-bounded index, such that $[H,U_1']\leq\langle[H,a]^G\rangle$. Since the index of $U_1$ in $G$ is $n$-bounded, we can find $n$-boundedly many commutators $c_1,\dots,c_s\in X$ such that $H=\langle c_1,\dots,c_s,H\cap U_1\rangle$. Let $T$ be the normal closure in $G$ of the product of the subgroups $[H,a]$ and $[H,c_i]$ for $i=1,\dots,s$. By Lemma \ref{02} each of these subgroups has $n$-bounded order. Our hypothesis is that each of them has at most $n$ conjugates. Thus, $T$ is a product of $n$-boundedly many finite subgroups, normalizing each other and having $n$-bounded order. We conclude that $T$ has finite $n$-bounded order. Therefore it is sufficient to show that the second derived group of the quotient $G/T$ has finite $n$-bounded order. So we pass to the quotient $G/T$. To avoid complicated notation the images of $G$, $H$ and $X$ will be denoted by the same symbols. We observe that the derived group of $HU_1$ is contained in $Z(H)$. This follows from the facts that $HU_1$ is generated by $c_1,\dots,c_s$ and $U_1$ and modulo $T$ we have $c_1,\dots,c_s\in Z(H)$ and $U_1'\leq Z(H)$.

Let $\mathcal X$ denote the family of subgroups $S\leq G$ with the following properties.
\begin{enumerate}
\item $H\leq S$;
\item $S'\leq Z(H)$;
\item $S$ has finite index in $G$.
\end{enumerate}
We already know that $\mathcal X$ is non-empty since it contains $HU_1$. Choose $J\in\mathcal X$ of minimal possible index $j$ in $G$. Since the index of $U_1$ in $G$ is $n$-bounded, the index $j$ is $n$-bounded, too. We will now use induction on $j$. If $j=1$, then $J=G$ and $H\leq Z(H)$. So $G''=1$ and we have nothing to prove. Thus, we assume that $j\geq2$.

Again, we take a commutator $a_0\in X$ such that $C_H(a_0)$ has maximal possible index in $H$ and write $a_0=[d,e]$ for suitable $d,e\in G$. If both $d$ and $e$ belong to $J$, we conclude (since $J'\leq Z(H)$) that $H$ is abelian and $G''=1$. Thus, assume that at least one of them, say $d$, is not in $J$. We will use Proposition \ref{04} with $K=G$. It follows that there is a subgroup $V$ of $n$-bounded index in $G$ such that $[H,[V,d]]\leq[H,a_0]$. Replacing if necessary $V$ by $V\cap J$, without loss of generality we can assume that $V\leq J$. Let $L=J\langle d\rangle$. Note that $L'=J'[J,d]$. Let $1=g_1,\dots,g_t$ be a full system of representatives of the right cosets of $V$ in $J$. Then $[J,d]$ is generated by $[V,d]^{g_1},\dots,[V,d]^{g_t}$ and $[g_1,d],\dots,[g_t,d]$. This is straightforward from the fact that $[vg,d]=[v,d]^g[g,d]$ for any $g,v\in G$. Next, for each $i=1,\dots,t$ set $x_i=[g_i,d]$. Let $R$ be the normal closure in $G$ of the product of the subgroups $[H,{a_0}]^{g_i}$ and $[H,x_i]$ for $i=1,\dots,t$. By Lemma \ref{02} each of these subgroups has $n$-bounded order. Our hypothesis is that each of them has at most $n$ conjugates. Thus, $R$ is a product of $n$-boundedly many finite subgroups, normalizing each other and having $n$-bounded order. We conclude that $R$ has finite $n$-bounded order. We see that $[H,L']\leq R$. Since $d\not\in J$, the index of $L$ in $G$ is strictly smaller than $j$. Therefore, by induction on $j$, the second derived group of $G/R$ is finite with bounded order. Taking into account that also $R$ is finite with bounded order, we deduce that $G''$ is finite with bounded order. The proof is now complete.
\end{proof}

\begin{proof}[Proof of Theorem \ref{outro}.] Recall that $G$ is a group in which $|x^{G'}|\leq n$ for any commutator $x$. We need to prove that $\gamma_3(G')$ is finite with $n$-bounded order. As before, we write $X$ for the set of commutators in $G$ and $H$ for the derived group. Choose a commutator $a\in X$ such that $C_H(a)$ has maximal possible index in $H$. We will use Proposition \ref{04} with $K=H$. It follows that $H$ contains a subgroup $U_1$ of finite $n$-bounded index such that $[H,U_1']\leq[H,a]^{d^{-1}}$ for some $d\in G$. Write $b_0=a^{d^{-1}}$ and so $[H,U_1']\leq[H,b_0]$. Since the index of $U_1$ in $H$ is $n$-bounded, we can find $n$-boundedly many commutators $b_1,\dots,b_s\in X$ such that $H=\langle b_1,\dots,b_s,U_1\rangle$. Let $T$ be the product of the subgroups $[H,b_i]$  for $i=0,1,\dots,s$. By Lemma \ref{02} each of these subgroups has $n$-bounded order. All of them are normal in $H$ and so $T$ is normal in $H$ and has finite $n$-bounded order. The center of $H/T$ contains the images of $U_1'$ and $b_1,\dots,b_s$. It follows that the quotient of $H/T$ over its center is abelian. Therefore $\gamma_3(H)\leq T$, which completes the proof.
\end{proof}

\end{document}